\setlist[enumerate]{label = \textup{(\alph*)}}
\newcommand{\subalign}[1]{%
  \vcenter{%
    \Let@ \restore@math@cr \default@tag
    \baselineskip\fontdimen10 \scriptfont\tw@
    \advance\baselineskip\fontdimen12 \scriptfont\tw@
    \lineskip\thr@@\fontdimen8 \scriptfont\thr@@
    \lineskiplimit\lineskip
    \ialign{\hfil$\m@th\scriptstyle##$&$\m@th\scriptstyle{}##$\hfil\crcr
      #1\crcr
    }%
  }%
}
\theoremstyle{definition}\newtheorem{Def}{Definition}
\theoremstyle{plain}\newtheorem{Th}{Theorem}
\theoremstyle{plain}
\theoremstyle{remark}
\theoremstyle{plain}\newtheorem{Le}{Lemma}
\theoremstyle{plain}
\theoremstyle{plain}\newtheorem{Cor}{Corollary}
\theoremstyle{plain}
\newcommand{\Cii}[1]{_{{}_{\scriptstyle #1}}}
\newcommand{\cii}[1]{_{{}_{#1}}}
\newcommand{\Av}[2]{\langle {#1}\rangle_{{}_{\scriptstyle #2}}}
\newcommand{\E}{\mathbb{E}}
\newcommand{\cnde}[2]{\E\,\bigl[{#1}\bigm|{#2}\bigr]}
\newcommand{\df}{\buildrel{}_\mathrm{def}\over=}
\newcommand{\chr}{\mathds{1}}
\newcommand{\Rpls}{\mathbb{R}_{\scriptscriptstyle\ge 0}}
\newcommand{\Zpls}{\mathbb{Z}_{\scriptscriptstyle\ge 0}}
\newcommand{\atoms}{\mathcal{A}}
\newcommand{\bell}{\bm B}
\DeclareMathOperator{\osc}{osc}
\DeclareMathOperator{\supp}{supp}
\DeclareMathOperator{\spn}{span}
\DeclareMathOperator{\diam}{diam}
\title[Bellman function for operators on martingales]{Bellman function method for general operators on martingales: arbitrary regular filtrations}
\author[Nikolay N. Osipov]{Nikolay N. Osipov$^{\ast,\dagger}$}
\address{$^\ast$St. Petersburg Department of V.~A.~Steklov Institute of Mathematics of the Russian Academy of Sciences, St. Petersburg, Russia}
\address{$^\dagger$St. Petersburg State University, St. Petersburg, Russia}
\email{nicknick AT pdmi DOT ras DOT ru}
\thanks{The work was supported by the Russian Science Foundation grant 19-11-00058P}
\keywords{Burkholder method, Gundy theorem}
\begin{document}
\begin{abstract}
    It has been recently shown that the Bellman function method can be applied in the general context of Gundy's extrapolation theorem for vector-valued martingales. But the additional assumption has been made that martingales are adapted to a certain special filtration. Here it is shown that those results can be extended to any regular filtration.
\end{abstract}
\maketitle
\setcounter{secnumdepth}{1}
\setcounter{tocdepth}{1}

\section{Introduction} 
Gundy's extrapolation theorem~\cite{Gu1968} and especially its version~\cite{Ki1985tran} for vector-valued martingales can be considered as a martingale counterpart of the fact that very general Calder\'on--Zygmund operators are bounded. For example, it leads to a discrete analog~\cite{Os2016tran} (for the Walsh basis) of Rubio de Francia's 
inequality, while in the original Fourier setting~\cite{Ru1985} the proof requires the boundedness of a Cal\-de\-r\'on--Zyg\-mund operator with a very weak and subtle smoothness condition.

On the other hand, the most basic example of Gundy's operators is the martingale transforms, which can be considered as a martingale counterpart of the Hilbert transform. In~\cite{Bu1984} Burkholder proves the $L^p$-bo\-un\-ded\-ness 
only for the martingale transforms, but with the method that gives a deep insight into the structure of the estimated $L^p$-norms and, in particular, results in sharp constants in the corresponding $L^p$-inequalities. 
His methodology gives rise to a new theory \cite{NaTr1996tran, Ose2012, SIZOV2015,VaVo2020, SIZV2023} and is now commonly referred to as the Bellman function method in harmonic analysis.

In~\cite{BoOsTs2022} the authors demonstrate that Burkholder's approach can be applied in the general context of Gundy's theorem for vector-valued martingales, but under the additional assumption that martingales are adapted to a certain special filtration. Here we show that those results can be extended to any regular filtration, which makes the approach outlined in~\cite{BoOsTs2022} truly general.

\section{Preliminaries}
Let $\mathcal{F}_0 \subseteq \mathcal{F}_1\subseteq \dots$ be a filtration of the Borel $\sigma$-algebra 
on an interval~$I\subset\mathbb{R}$. We also set 
$\mathcal{F}_\infty \df \sigma(\cup_n\mathcal{F}_n)$. 
We take $\delta \in \bigl(0, \frac{1}{2}\bigr]$ and
impose the following regularity condition on the filtration.
\begin{enumerate}[label = \textup{(R)}]
    \item\label{it:R} We have $\mathcal{F}_0 = \{I,\emptyset\}$, and
    each algebra $\mathcal{F}_n$ is finite. 
    If 
    $J$ is an atom of $\mathcal{F}_n$, 
    $Q$ is an atom of $\mathcal{F}_{n+1}$, 
    and $Q\subseteq J$, then $|Q|/|J|\ge\delta>0$.
\end{enumerate} 
Let $\mathcal{H}$ be a separable Hilbert space. 
For $f \in L^1(I,\mathcal{H})$, we denote 
$$\E_n f\df \cnde{f}{\mathcal{F}_n}. 
$$
A~sequence~$\{f_n\}$ of functions in $L^1(I,\mathcal{H})$ is called a martingale 
if 
$
\E_m f_n = f_{m}
$
for~$m\le n$.

We say that a martingale $\{f_n\}$ is $L^p$-bounded if 
$$
\|\{f_n\}\|\Cii{L^p} \df \sup_n \|f_n\|\Cii{L^p} < \infty.
$$
In this case, if $1<p<\infty$, then $f_n$ converge in $L^p(I,\mathcal{F}_\infty,\mathcal{H})$ to a function~$f$ such that 
$f_n = \E_n f$ and $\|\{f_n\}\|\Cii{L^p} = \|f\|\Cii{L^p}$.
On the other hand, any function $f \in L^p(I,\mathcal{F}_\infty,\mathcal{H})$, $1\le p <\infty$, generates the $L^p$-bounded martingale $\{\E_n f\}$ such that 
$$
\E_n f \xrightarrow{L^p} f\quad\mbox{and}\quad   \|\{\E_n f\}\|\Cii{L^p} =\|f\|\Cii{L^p}.
$$
Thus, for $1< p <\infty$ we identify $L^p$-bounded martingales with their limit functions.
Details can be found in the manuals~\cite{DiUh1977,Wi1991}.

We call a martingale simple if $f_{n+1}\equiv f_n$ for all sufficiently large~$n$. In particular, a simple martingale is $L^p$-bounded for any $1\le p \le \infty$.
Let $S(I,\mathcal{H})$ be the space of simple martingales, and let $\mathcal{L}(I, \mathcal{H})$ be the space of all linear operators that transform simple martingales into measurable scalar functions on~$I$.
\begin{Def}\label{def:class_G}
    Let $T\in \mathcal{L}(I, \mathcal{H})$. 
    We say that $T$ belongs to the class $\mathcal{G}(I,\mathcal{H})$ if it has the following properties.
\begin{enumerate}[label = \textup{(G\arabic*)}]
    \item\label{it:G1} $\|Tf\|\Cii{L^2} \le \|f\|\Cii{L^2}$.
    \item\label{it:G2} If $f$ satisfies the relations $\Delta_0f \df f_0\equiv \bm 0$ and 
    $$
        \Delta_n f \df f_n-f_{n-1} = \chr_{e_n} \Delta_n f
    $$
    for $n>0$ and some $e_n\in\mathcal{F}_{n-1}$, then 
    $$
    \{|Tf| > 0\} \subset \bigcup_{n>0} e_n.
    $$
\end{enumerate}
\end{Def}
Due to property~\ref{it:G1}, we can treat any $T \in \mathcal{G}(I,\mathcal{H})$ as a bounded linear operator from 
$L^2(I,\mathcal{F}_\infty,\mathcal{H})$ to $L^2(I)$.
The version~\cite{Ki1985tran} of Gundy's theorem~\cite{Gu1968} for operators on vector-valued martingales implies that operators $T \in \mathcal{G}(I,\mathcal{H})$ are $L^p$-bounded for $1<p\le 2$. Our goal is to describe how the Bellman function method works for this $L^p$-boundedness.

\section{Results}
Henceforth, we suppose $1<p\le 2$, and $\tfrac{1}{p}+\tfrac{1}{q} = 1$. 
We agree that for vectors $x,y \in \mathcal{H}$, the notation~$xy$ means their inner product, and $|x|$ means the $\mathcal{H}$-norm of~$x$. 
For $f \in L^2(I,\mathcal{H})$ and $J \in \mathcal{F}_\infty$, 
we set
$$\Av{f}{J} \df \frac{1}{|J|}\int_J f$$
and
$$
    \osc_J^2(f) \df \Bigl\langle\bigl|f-\Av{f}{J}\bigr|^2\Bigr\rangle_{J}
    =\bigl\langle|f|^2\bigr\rangle_{J} - \bigl|\Av{f}{J}\bigr|^2.
$$ 

Suppose $T\in\mathcal{G}(I,\mathcal{H})$. 
We have $\|T^*\|\cii{L^2\to L^2} =\|T\|\cii{L^2\to L^2}
\le 1$, and thus the inequality
\begin{equation}\label{eq:x2_ineq}
    \Av{g^2}{I} - \osc_I^2(T^*g) \ge \bigl|\Av{T^*g}{I}\bigr|^2 > 0
\end{equation}
holds for any $g\in L^2(I)$. 

We introduce the Bellman function
\begin{equation}\label{eq:bell_def}
	\bell(x) \df \sup\left\{\bigl\langle g\,T\bigl[f-\Av{f}{I}\bigr]\bigr\rangle\Cii{I}
	\,\middle|\; 
	\begin{aligned}
	&\Av{f}{I} = x_1,\\[1pt] 
	&\Av{g^2}{I} - \osc_I^2(T^*g) = x_2,\\[1pt] 
	&\Av{|f|^p}{I} = x_3,\; \Av{|g|^q}{I} = x_4
	\end{aligned}
	\right\},
\end{equation}
where $x = (x_1,x_2,x_3,x_4)\in \mathcal{H}\times\Rpls^3$ and the supremum is taken over 
$$f\in S(I,\mathcal{H}),\quad g\in L^2(I),\quad\mbox{and}\quad T\in\mathcal{G}(I,\mathcal{H})$$
satisfying the identities after the vertical bar. 

Let $\Omega_{\bell}$ consist of all the points~$x$ for which the supremum in~\eqref{eq:bell_def} is taken over a non-empty set. 
Applying Jensen's 
inequality in vector and scalar forms (or H\"older's inequality together with Minkowski's integral inequality for the $\mathcal{H}$-norm), we obtain
$$
    \Omega_{\bell} \subseteq \Omega_p \df \bigl\{x \in \mathcal{H}\times\Rpls^3\bigm| |x_1|^p \le x_3,\, x_2^q \le x_4^{2} \bigr\}.
$$ 
\begin{Def}\label{def:class_K}
We say that a function $B\in C(\Omega_p)$ belongs to 
the class $\mathcal{K}^p_\delta(\mathcal{H})$ for $\delta \in \bigl(0, \frac{1}{2}\bigr]$
if it satisfies the following boundary condition and geometric concave-type condition.
\begin{enumerate}[label = \textup{(B\arabic*)}]
    \item\label{it:bnd_cnd} If $|x_1|^p = x_3$ then $B(x) \ge 0$.
    \item\label{it:cnc_cnd} If for $N\in\mathbb{N}$, $x, x^1,\dots,x^N \in \Omega_p$, $d \in \mathbb{R}$, and 
    $\lambda_1,\dots,\lambda_N \in \Rpls$ such that $\lambda_1+\dots+\lambda_N=1$ 
    and $\lambda_k \ge \delta$, we have
    \begin{equation*}
        \sum_{k=1}^N\lambda_k x^k - x = (\bm{0}, d^2, 0, 0),
    \end{equation*}
    then
    \begin{equation}\label{eq:main_ineq}
        B(x) \ge |d|\,\diam\bigl\{x_1^1,\dots,x_1^N\bigr\} + \sum_{k=1}^N\lambda_k B(x^k).
    \end{equation}
\end{enumerate}
\end{Def}
\begin{Le}\label{le:any_dlt}
    If $B\in\mathcal{K}^p_{{1}/{2}}(\mathcal{H})$, then for any $\delta \in \bigl(0, \frac{1}{2}\bigr]$ there exists a constant $C_\delta > 0$ such that
    ${C_\delta B \in \mathcal{K}^p_{\delta}(\mathcal{H})}$.
\end{Le}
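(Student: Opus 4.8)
The condition \ref{it:bnd_cnd} only requires nonnegativity of $B$ on the boundary piece $\{|x_1|^p = x_3\}$, and this is clearly inherited by $C_\delta B$ for any $C_\delta>0$; so the whole point is to deduce the $\delta$-version of \ref{it:cnc_cnd} from its $\tfrac12$-version. I plan to extract two consequences of $B\in\mathcal{K}^p_{1/2}(\mathcal{H})$. Taking $N=2$, $\lambda_1=\lambda_2=\tfrac12$ (admissible since $\tfrac12\ge\tfrac12$) and $d=0$ in \ref{it:cnc_cnd} gives midpoint concavity of $B$; since $B$ is continuous and $\Omega_p$ is convex (the set $\{|x_1|^p\le x_3\}$ is convex because $x_1\mapsto|x_1|^p$ is convex, and $\{x_2^q\le x_4^2\}$ is convex because $x_4\mapsto x_4^{2/q}$ is concave for $q\ge2$), this upgrades to genuine concavity of $B$ on $\Omega_p$, hence to Jensen's inequality $B\bigl(\sum_k\mu_k x^k\bigr)\ge\sum_k\mu_k B(x^k)$ for every probability vector $\mu$. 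The second consequence is \ref{it:cnc_cnd} for $N=2$ with general $d$, which I read as a single-split gain inequality: for $z^1,z^2\in\Omega_p$ with $\tfrac12(z^1+z^2)-x=(\bm 0,d^2,0,0)$,
\[
    B(x)\ge |d|\,|z^1_1-z^2_1| + \tfrac12\bigl(B(z^1)+B(z^2)\bigr).
\]

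Given the data of the $\delta$-version of \ref{it:cnc_cnd} --- points $x,x^1,\dots,x^N\in\Omega_p$, weights $\lambda_k\ge\delta$ summing to $1$, and $d$ with $\bar x\df\sum_k\lambda_k x^k = x+(\bm 0,d^2,0,0)$ --- I will manufacture one equal-weight split realizing exactly this $d$. Assuming $N\ge2$ (the case $N=1$ is the single-split inequality with $z^1=z^2=\bar x$), choose $i,j$ attaining $\diam\{x^1_1,\dots,x^N_1\}$ and set $\mu^1_k=\lambda_k+\tfrac12\nu_k$, $\mu^2_k=\lambda_k-\tfrac12\nu_k$, where $\nu_i=2\delta$, $\nu_j=-2\delta$, and $\nu_k=0$ otherwise. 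Because $\lambda_i,\lambda_j\ge\delta$, both $\mu^1,\mu^2$ are probability vectors, and $\tfrac12(\mu^1+\mu^2)=\lambda$. Putting $z^1=\sum_k\mu^1_k x^k$ and $z^2=\sum_k\mu^2_k x^k$, convexity of $\Omega_p$ gives $z^1,z^2\in\Omega_p$, while coordinate-wise $\tfrac12(z^1+z^2)=\bar x$, so the hypothesis $\bar x - x=(\bm 0,d^2,0,0)$ says precisely that the \emph{given} $d$ fits the single-split inequality. Moreover $z^1_1-z^2_1=\sum_k\nu_k x^k_1=2\delta\,(x^i_1-x^j_1)$, so $|z^1_1-z^2_1|=2\delta\,\diam\{x^1_1,\dots,x^N_1\}$.

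Combining the two consequences, the single-split inequality followed by Jensen applied to $B(z^1)$ and $B(z^2)$ gives
\[
    B(x)\ge |d|\,|z^1_1-z^2_1| + \tfrac12\bigl(B(z^1)+B(z^2)\bigr) \ge 2\delta\,|d|\,\diam\{x^1_1,\dots,x^N_1\} + \sum_{k=1}^N\lambda_k B(x^k),
\]
using $\tfrac12(\mu^1_k+\mu^2_k)=\lambda_k$ in the last sum. Choosing $C_\delta=\tfrac1{2\delta}$ and multiplying through turns this into inequality~\eqref{eq:main_ineq} for $C_\delta B$ at parameter $\delta$, which together with \ref{it:bnd_cnd} shows $C_\delta B\in\mathcal{K}^p_\delta(\mathcal{H})$.

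The step I expect to be the crux is the construction in the second paragraph: recognizing that a genuinely many-point, unequally weighted configuration can be produced from a \emph{single} equal-weight split --- the only kind that $\tfrac12$-concavity directly supplies --- by perturbing only the two masses that realize the diameter, and that this perturbation may be taken of size exactly $\delta$ precisely because every $\lambda_k\ge\delta$. The two things to watch are that the perturbation leaves the second-coordinate average untouched (so the same $d$ transfers, with no need to invoke monotonicity in $x_2$ separately) and that it keeps $\mu^1,\mu^2$ nonnegative; both hold by design, and they are what pin down the constant $C_\delta=\tfrac1{2\delta}$.
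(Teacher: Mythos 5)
Your proof is correct, and its key construction is genuinely different from the paper's. Both arguments share the same skeleton: the only instances of \ref{it:cnc_cnd} available for $\mathcal{K}^p_{1/2}(\mathcal{H})$ are the equal-weight two-point ones, so each proof spends that gain inequality exactly once, on a manufactured midpoint split of $\sum_k\lambda_k x^k$ whose first coordinates are separated by at least a $\delta$-dependent multiple of $\diam\{x_1^1,\dots,x_1^N\}$, and then redistributes the mass by concavity. The difference lies in how the split is manufactured. The paper first reduces to dyadic weights $\lambda_k=a_k/2^M$ by continuity, replaces the weighted points by $2^M$ copies, sorts the copies by the projection of their first coordinates onto the diameter direction, and halves the sorted list; the separation of the two half-averages then rests on the comparison~\eqref{eq:a_asymp_b}, and the resulting constant $C_\delta=1/c_\delta$ is implicit. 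You instead perturb the given weights themselves, transferring mass $\delta$ between the two diameter-attaining indices; this produces the split $z^{1,2}=\sum_k\lambda_k x^k\pm\delta\,(x^i-x^j)$ with separation \emph{exactly} $2\delta\,\diam\{x_1^1,\dots,x_1^N\}$, handles arbitrary (not merely dyadic) weights in one stroke, and yields the explicit constant $C_\delta=\frac{1}{2\delta}$. The continuity argument has not vanished from your proof --- it is hidden in the standard upgrade from midpoint concavity to concavity --- and the convexity of $\Omega_p$ that this upgrade (and the membership $z^1,z^2\in\Omega_p$) requires is verified correctly: $x_1\mapsto|x_1|^p$ is convex, and $x_4\mapsto x_4^{2/q}$ is concave because $q\ge 2$. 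The degenerate case where all $x_1^k$ coincide is harmless, since any pair $i\ne j$ then attains the zero diameter and the claim reduces to Jensen. On balance, your route is shorter and gives a cleaner, explicit constant; the paper's sorting construction accomplishes the same end while keeping every step an explicit finite splitting, at the price of the combinatorial separation estimate~\eqref{eq:to_diam} and an untracked constant.
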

\begin{Th}\label{th:BleB}
If $\delta \in \bigl(0, \frac{1}{2}\bigr]$ is the parameter from the regularity condition~\ref{it:R} and ${B\in\mathcal{K}_\delta^p(\mathcal{H})}$, then $\bell(x) \le B(x)$ for all $x\in \Omega_{\bell}$.
\end{Th}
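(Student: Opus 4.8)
The plan is to run the standard Bellman induction along the filtration, reducing the assertion to the two defining properties of $B$. Fix $x\in\Omega_{\bell}$ together with an admissible triple $f\in S(I,\mathcal H)$, $g\in L^2(I)$, $T\in\mathcal G(I,\mathcal H)$ realizing the constraints in~\eqref{eq:bell_def}; it suffices to prove $\Av{g\,T[f-\Av f I]}{I}\le B(x)$ and then take the supremum. Writing $h\df T^*g$ and using the adjoint relation together with the orthogonality of martingale increments, I would first rewrite the target functional as a telescoping pairing
\begin{equation*}
\int_I g\,T\bigl[f-\Av f I\bigr]=\sum_{n\ge 1}\int_I \Delta_n f\cdot\Delta_n h,
\end{equation*}
the sum being finite since $f$ is simple, say $f_n\equiv f_M$ for $n\ge M$.

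Next I attach to every atom $J$ (of any $\mathcal F_n$) the point
\begin{equation*}
x(J)\df\Bigl(\Av f J,\;\Av{g^2}{J}-\osc_J^2(h),\;\Av{|f|^p}{J},\;\Av{|g|^q}{J}\Bigr),
\end{equation*}
note that $x(I)=x$, and set $\Phi_n\df\sum_{J}|J|\,B(x(J))$, the sum running over the atoms of $\mathcal F_n$. Fix an atom $J$ of $\mathcal F_n$ with children $Q_1,\dots,Q_N$ in $\mathcal F_{n+1}$ and weights $\lambda_k\df|Q_k|/|J|$; by~\ref{it:R} each $\lambda_k\ge\delta$. The averaging identities make the first, third and fourth coordinates of $\sum_k\lambda_k x(Q_k)-x(J)$ vanish, while the variance decomposition identifies the second coordinate as $d_J^2$, with $d_J^2=\sum_k\lambda_k\bigl|\Av h{Q_k}-\Av h J\bigr|^2$. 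Thus~\ref{it:cnc_cnd} applies and gives, after multiplying by $|J|$ and summing over $J$, the bound $\Phi_n-\Phi_{n+1}\ge\sum_{J}|J|\,|d_J|\,\diam\{\Av f{Q_k}\}$. On the other hand, subtracting from $\Av f{Q_k}-\Av f J$ a constant whose weighted sum vanishes and applying Cauchy--Schwarz bounds the corresponding block of the pairing by $\bigl|\int_J\Delta_{n+1}f\cdot\Delta_{n+1}h\bigr|\le|J|\,|d_J|\,\diam\{\Av f{Q_k}\}$. Telescoping these two chains from $n=0$ to $n=M-1$ yields $|I|\,B(x)=\Phi_0\ge\Phi_M+\bigl|\int_I g\,T[f-\Av f I]\bigr|$. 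At the stabilization level $M$ the simple martingale $f$ is constant on each atom $J$, so $|x_1(J)|^p=x_3(J)$ and~\ref{it:bnd_cnd} gives $B(x(J))\ge 0$, hence $\Phi_M\ge0$; dividing by $|I|$ and taking the supremum over admissible triples then completes the argument.

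The step I expect to be the main obstacle is verifying that each $x(J)$ actually lies in $\Omega_p$, so that $B(x(J))$ is defined and~\ref{it:cnc_cnd} is applicable; the only non-trivial requirement is $x_2(J)\ge 0$, equivalently $\osc_J^2(T^*g)\le\Av{g^2}{J}$. This is where both axioms defining $\mathcal G(I,\mathcal H)$ enter, and I would prove it by localization. Every mean-zero function supported on the atom $J$ is the $L^2$-limit of martingales $F$ with $F_n\equiv\bm0$ and increments supported in $J$; by~\ref{it:G2} such $F$ satisfy $\supp TF\subseteq J$, whence $\langle T^*g,F\rangle=\int_J g\,TF\le\|g\chr_J\|\Cii{L^2}\|TF\|\Cii{L^2}\le\|g\chr_J\|\Cii{L^2}\|F\|\Cii{L^2}$ by~\ref{it:G1}. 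Testing against functions approximating $h-\Av h J$ on $J$ gives $\osc_J^2(h)\le\Av{g^2}{J}$, and together with $\osc_J^2\ge 0$ and the power-mean estimate $\Av{g^2}{J}\le\Av{|g|^q}{J}^{2/q}$ (valid since $q\ge 2$) this places $x(J)$ in $\Omega_p$; the same localization at $J=I$ recovers~\eqref{eq:x2_ineq}. Everything else is routine martingale bookkeeping.
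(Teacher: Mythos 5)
Your proof is correct, and its skeleton is the same Bellman induction as the paper's: attach the point $x(J)$ to every atom, apply the concavity condition~\ref{it:cnc_cnd} across one generation with weights $|Q_k|/|J|\ge\delta$ (this is where~\ref{it:R} enters), identify the second-coordinate defect with $d_J^2$ via the one-step variance decomposition, bound each block of the pairing by Cauchy--Schwarz and the diameter, telescope, and finish with~\ref{it:bnd_cnd} at the stabilization level of the simple martingale $f$. Where you genuinely diverge is in the technical implementation, in two ways. First, you run the induction on the original filtration $\{\mathcal{F}_n\}$, splitting all atoms of a generation at once; the paper instead builds an auxiliary filtration $\{\mathcal{F}_J\}\cii{J\in\mathcal{D}}$ that splits one atom at a time and develops Lemmas~\ref{le:orth1}--\ref{le:loc} (the refined differences $\Delta_J$ are self-adjoint projections, mutually orthogonal, and localized) as infrastructure; your version shows this machinery is not needed for the induction itself. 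Second --- and this is the crux --- your verification of $x_2(J)\ge 0$ is a direct duality test: with $F=\chr_J\bigl(T^*g-\Av{T^*g}{J}\bigr)$, the truncations $\E_m F$ are simple martingales vanishing up to the generation of $J$ with increments supported in $J$, so~\ref{it:G2} localizes $TF$ to $J$ and~\ref{it:G1} gives $|J|\osc_J^2(T^*g)=\int_J g\,TF\le \|g\chr_J\|\Cii{L^2}\|F\|\Cii{L^2}$, hence $\osc_J^2(T^*g)\le\Av{g^2}{J}$. The paper reaches the same conclusion more circuitously: it uses self-adjointness and localization of all the differences $\Delta_Q$, $Q\subseteq J$, to establish the identity $\osc_J^2(T^*g)=\tfrac{|I|}{|J|}\osc_I^2\bigl(T^*[g\chr_J]\bigr)$ via the decomposition~\eqref{eq:osc_ser}, and then applies~\eqref{eq:x2_ineq} to $g\chr_J$. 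Both arguments rest on exactly the same use of~\ref{it:G1} and~\ref{it:G2}; yours isolates the single test function that makes the estimate work and is leaner, while the paper's yields a stronger structural identity and the series~\eqref{eq:osc_ser}, which it reuses to define $d_J$ and derive~\eqref{eq:d2_x2}. One small precision worth adding to your write-up: the density claim should be stated for $\mathcal{F}_\infty$-measurable mean-zero functions supported on $J$, since $\E_m F\to F$ in $L^2$ requires $\mathcal{F}_\infty$-measurability; this is harmless in your application because $T^*g$ lies in $L^2(I,\mathcal{F}_\infty,\mathcal{H})$ by construction.
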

Using the function $\mathcal{B} \in \mathcal{K}^p_{{1}/{2}}(\mathcal{H})$
constructed explicitly in~\cite{BoOsTs2022}, we can deduce, from Lemma~\ref{le:any_dlt} and Theorem~\ref{th:BleB}, the following consequence.
\begin{Cor}\label{cor}
    If $T\in\mathcal{G}(I,\mathcal{H})$ and $f\in S(I,\mathcal{H})$, then for $1< p \le 2$ we have
    $$
     \|Tf\|\Cii{L^p} \le C_{p,\delta}\, \|f\|\Cii{L^p},
    $$
    and $T$ can be continuously extended to $L^p(I,\mathcal{F}_\infty,\mathcal{H})$.
\end{Cor}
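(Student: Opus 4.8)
The plan is to feed the explicit majorant $\btype\in\mathcal{K}^p_{1/2}(\mathcal{H})$ of~\cite{BoOsTs2022} through Lemma~\ref{le:any_dlt} and Theorem~\ref{th:BleB}, and then to pass from the resulting pointwise bound on $\bell$ to the operator inequality by duality. First I would fix the parameter $\delta\in\bigl(0,\tfrac12\bigr]$ supplied by the regularity condition~\ref{it:R} and invoke Lemma~\ref{le:any_dlt} to obtain a constant $C_\delta>0$ with $C_\delta\btype\in\mathcal{K}^p_\delta(\mathcal{H})$. Theorem~\ref{th:BleB}, applied to this $\delta$ and to $B=C_\delta\btype$, then yields $\bell(x)\le C_\delta\,\btype(x)$ for every $x\in\Omega_{\bell}$.

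Next I would extract a scale-invariant majorant for $\bell$. Replacing $f\mapsto tf$ and $g\mapsto sg$ with $t,s>0$ in~\eqref{eq:bell_def} multiplies the objective by $st$ and sends the data to $(tx_1,s^2x_2,t^px_3,s^qx_4)$ (using $T^*(sg)=sT^*g$ and the quadratic scaling of $\osc_I^2$); as these substitutions are bijections of the admissible set, $\bell$ is homogeneous,
$$\bell\bigl(tx_1,\,s^2x_2,\,t^px_3,\,s^qx_4\bigr)=st\,\bell(x),\qquad t,s>0.$$
Likewise, composing with a unitary $U$ on $\mathcal{H}$ (replacing $f$ by $Uf$ and $T$ by the operator $h\mapsto T(U^{*}h)$, which again lies in $\mathcal{G}(I,\mathcal{H})$ by~\ref{it:G1}) leaves the objective and the constraints $x_2,x_3,x_4$ fixed while sending $x_1\mapsto Ux_1$, so $\bell$ depends on $x_1$ only through $|x_1|$. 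For $x\in\Omega_{\bell}$ with $x_3,x_4>0$ I would normalize via $t=x_3^{-1/p}$, $s=x_4^{-1/q}$ and rotate $x_1$ onto a fixed ray $\Rpls e$; this reduces matters to bounding $\bell(re,x_2,1,1)\le C_\delta\,\btype(re,x_2,1,1)$ over the compact square $\{(r,x_2):0\le r\le1,\ 0\le x_2\le1\}$, where the continuous right-hand side is bounded by some $C'_{p,\delta}$. The degenerate cases $x_3x_4=0$ force $f\equiv\bm 0$ or $g\equiv0$, whence $\bell(x)=0$. Altogether
$$\bell(x)\le C'_{p,\delta}\,x_3^{1/p}x_4^{1/q}\qquad\text{for all }x\in\Omega_{\bell}.$$

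With the majorant in hand I would fix a simple martingale $f$, set $\tilde f\df f-\Av{f}{I}$, and test against $g\in L^q(I)$; since $q\ge2$ and $|I|<\infty$ we have $g\in L^2(I)$, so the triple $(f,g,T)$ is admissible in~\eqref{eq:bell_def} with data $x=\bigl(\Av{f}{I},\,\Av{g^2}{I}-\osc_I^2(T^*g),\,\Av{|f|^p}{I},\,\Av{|g|^q}{I}\bigr)\in\Omega_{\bell}$. Consequently
$$\int_I g\,T\tilde f=|I|\,\Av{g\,T\bigl[f-\Av{f}{I}\bigr]}{I}\le |I|\,\bell(x)\le C'_{p,\delta}\,|I|\,x_3^{1/p}x_4^{1/q}=C'_{p,\delta}\,\|f\|\Cii{L^p}\|g\|\Cii{L^q},$$
and taking the supremum over $\|g\|\Cii{L^q}\le1$ yields $\|T\tilde f\|\Cii{L^p}\le C'_{p,\delta}\|f\|\Cii{L^p}$. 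Because operators of class $\mathcal{G}(I,\mathcal{H})$ need not annihilate constants, I would control the mean separately: by~\ref{it:G1}, the embedding $L^2(I)\subset L^p(I)$, and Jensen's inequality,
$$\|T(\Av{f}{I}\chr_I)\|\Cii{L^p}\le|I|^{\frac1p-\frac12}\,\|\Av{f}{I}\chr_I\|\Cii{L^2}=|I|^{\frac1p}\,\bigl|\Av{f}{I}\bigr|\le\|f\|\Cii{L^p}.$$
Adding the two estimates and using the linearity of $T$ gives $\|Tf\|\Cii{L^p}\le(C'_{p,\delta}+1)\|f\|\Cii{L^p}=:C_{p,\delta}\|f\|\Cii{L^p}$.

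Finally, since $\mathcal{F}_\infty=\sigma(\cup_n\mathcal{F}_n)$, the conditional expectations $\E_Nf$ are simple martingales converging to $f$ in $L^p(I,\mathcal{F}_\infty,\mathcal{H})$, so $S(I,\mathcal{H})$ is dense there; the uniform bound just obtained then lets $T$ extend by continuity to a bounded operator on $L^p(I,\mathcal{F}_\infty,\mathcal{H})$ with the same constant $C_{p,\delta}$. I expect the main obstacle to lie in the passage from the pointwise inequality $\bell\le C_\delta\btype$ to the norm inequality: one must produce the scale-invariant majorant above — here the non-compactness of balls in $\mathcal{H}$ is circumvented through the unitary symmetry of~\eqref{eq:bell_def} — and, crucially, notice that $Tf\neq T\bigl[f-\Av{f}{I}\bigr]$ in general, so that the contribution of $\Av{f}{I}$ must be handled through~\ref{it:G1} rather than through the Bellman function.
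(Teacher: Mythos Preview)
Your argument is correct and follows the same overall arc as the paper --- feed the majorant $\btype$ of~\cite{BoOsTs2022} through Lemma~\ref{le:any_dlt} and Theorem~\ref{th:BleB}, exploit the homogeneity of~$\bell$, pass to the norm estimate by duality, handle the mean of~$f$ separately, and extend by density --- but the passage from $\bell\le C_\delta\btype$ to the scale-invariant bound is organized differently. The paper uses the explicit structure $\btype(x)=C_p(x_3+x_4)-h(x_1,x_2)$ with $h\ge0$: it drops~$h$, applies only the one-parameter homogeneity $\bell(x)=\bell(\lambda x_1,\lambda^{-2}x_2,\lambda^p x_3,\lambda^{-q}x_4)$, and minimizes $\lambda^p x_3+\lambda^{-q}x_4$ in~$\lambda$; this avoids any appeal to unitary invariance or compactness. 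Your route instead uses the full two-parameter homogeneity together with the rotational symmetry of the extremal problem to normalize to the compact square $\{(r,x_2)\in[0,1]^2\}$ and then invokes only the continuity of~$\btype$; this has the advantage of not requiring the special form of~$\btype$, at the cost of the extra symmetry argument (where, incidentally, you should also note that the substitute operator $h\mapsto T(U^*h)$ inherits~\ref{it:G2}, not just~\ref{it:G1}). The treatment of the mean is likewise equivalent but packaged differently: the paper bounds the scalar $\Av{f}{I}\,\Av{T^*g}{I}$ via H\"older, whereas you bound $\|T(\Av{f}{I}\chr_I)\|\Cii{L^p}$ directly through~\ref{it:G1} and the embedding $L^2\hookrightarrow L^p$.
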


\section{Proof of Lemma~\ref{le:any_dlt}}
Since $B \in C(\Omega_p)$, it suffices to verify~\ref{it:cnc_cnd} in a situation where~$\lambda_k$ are dyadic rationals. 
Thus, we assume that $\lambda_k = \frac{a_k}{b}$, where $b = a_1+\dots+a_N = 2^M$. We denote 
$$\mathcal{X}\df \bigl\{x_1^1,\dots,x_1^N\bigr\} \subset\mathcal{H}.$$
Let $y_1,y_2 \in \mathcal{X}$ be such that $|y_1-y_2| = \diam\mathcal{X}$. Let $z_1,\dots,z_b$ be a sequence that is obtained by making 
$a_k$ copies of each $x_1^k$ and sorting the result by $|v_i|$, where 
$v_i\df P_{y_2-y_1}(z_i - y_1)$. Here $P_{y_2-y_1}$ is the orthogonal projection onto the line $\spn \{y_2-y_1\}$.
Next, we have $\delta  \le \frac{a_i}{b} \le 1 - \delta(N-1)$ and $N \le \frac{1}{\delta}$. In particular, this implies that
\begin{equation}\label{eq:a_asymp_b}
    a_1 \asymp\dots\asymp a_N \asymp b,
\end{equation}
where $a\asymp b$ means that $c_\delta b \le a \le C_\delta b$. We denote
$$
V_1 \df \frac{v_1 + \dots + v_{b/2}}{b/2}\quad\mbox{and}\quad V_2 \df \frac{v_{b/2+1} + \dots + v_{b}}{b/2}.
$$
Without loss of generality we assume that $z_1 = y_1 = x_1^1$ and $z_b = y_2 = x_1^N$. Then the vector~$v_1$ appears in the formula for $V_1$ at least $\min(b/2,a_1)$ times,
and the vector~$v_b$ appears in the formula for $V_2$ at least $\min(b/2,a_N)$ times. Thus, due to~\eqref{eq:a_asymp_b} we have $|V_1 - v_{b/2}|\asymp |v_1 - v_{b/2}|$ 
and $|V_2 - v_{b/2+1}|\asymp |v_b - v_{b/2+1}|$. This implies that
\begin{equation}\label{eq:to_diam}
\Bigl|\frac{z_1 + \dots + z_{b/2}}{b/2} - \frac{z_{b/2+1} + \dots + z_{b}}{b/2}\Bigr| \ge |V_1 - V_2| \ge c_\delta |v_1 - v_b| 
= c_\delta\diam\mathcal{X}.
\end{equation}
Now, using once the dyadic version of \eqref{eq:main_ineq} together with~\eqref{eq:to_diam}, and after that additionally using $M-1$ times the midpoint concavity of $B$, we conclude that
$\frac{1}{c_\delta}B$ satisfies~\eqref{eq:main_ineq}. \qed

\section{Proof of Theorem~\ref{th:BleB}}
First, we need certain preliminary constructions and lemmas.
By $\atoms_m$ we denote the set of atoms of the corresponding algebra $\mathcal{F}_m$, 
$m \in \Zpls \cup \{\infty\}$. 
We also introduce the set $\mathcal{D} \df (\cup_n\atoms_n) \setminus \atoms_{\infty}$ of counterparts of dyadic intervals: the set of all the atoms that do not remain stable and are eventually split.
From the filtration $\{\mathcal{F}_n\}$, we obtain a new filtration $\{\mathcal{F}_0,\mathcal{F}_{J}\}\cii{J\in\mathcal{D}}$ as follows. 
We leave $\mathcal{F}_0$ as it is. Next, for each $n \in \Zpls$ in ascending order, we consider, in an arbitrary but fixed order, all the atoms
$J \in \atoms_n \setminus \atoms_{n+1}$, i.~e. the atoms that are split at time $n$ (if any). For each such~$J$, we build $\mathcal{F}_J$ as the minimum algebra that contains the new atoms $Q\in \atoms_{n+1}$, $Q \subseteq J$,
together with the previous algebra in the filtration under construction. 
For each algebra~$\mathcal{F}_J$ in the new filtration, we introduce the set~$\atoms_J$ of its atoms. 

For an algebra $\mathcal{F}_J$ (where $J \in \mathcal{D}$ are already split) we consider the previous algebra $\mathcal{F}_J^{\,\mathrm{prev}} \subset \mathcal{F}_J$ in the filtration (where $J$ is still uncut and other atoms are the same). For $f \in L^1(I,\mathcal{H})$, we introduce the operators
$$
    \Delta_0 f \df\E_0 f\quad\mbox{and}\quad 
    \Delta_J f \df \cnde{f}{\mathcal{F}_J} - \cnde{f}{\mathcal{F}_J^{\,\mathrm{prev}}},\quad J \in \mathcal{D}.
$$
We note that $\supp\Delta_J f \subseteq J$. 

\begin{Le}\label{le:orth1}
  The operators $\Delta_J$ are orthogonal projections in $L^2(I,\mathcal{H})$. 
\end{Le}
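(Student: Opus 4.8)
The plan is to reduce the statement to two elementary facts: that conditional expectation is an orthogonal projection in $L^2$, and that the difference of two nested orthogonal projections is again an orthogonal projection. The operator $\Delta_0 = \E_0$ is simply $\cnde{\cdot}{\mathcal{F}_0}$, so it is covered by the first fact alone; for $J \in \mathcal{D}$ the operator $\Delta_J = \cnde{\cdot}{\mathcal{F}_J} - \cnde{\cdot}{\mathcal{F}_J^{\,\mathrm{prev}}}$ is a difference of the conditional expectations attached to the nested pair $\mathcal{F}_J^{\,\mathrm{prev}} \subseteq \mathcal{F}_J$, which is precisely the setting of the second fact.

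First I would record that for any sub-$\sigma$-algebra $\mathcal{G}$ the map $P_{\mathcal{G}} \df \cnde{\cdot}{\mathcal{G}}$ is the orthogonal projection of $L^2(I,\mathcal{H})$ onto the closed subspace $L^2(I,\mathcal{G},\mathcal{H})$ of $\mathcal{G}$-measurable functions. Idempotency is immediate, since $P_{\mathcal{G}} f$ is already $\mathcal{G}$-measurable. For self-adjointness with respect to the inner product $\langle f, h\rangle = \int_I f h$ (recall $fh$ denotes the pointwise $\mathcal{H}$-inner product), I would use the pull-out property: because $P_{\mathcal{G}} f$ is $\mathcal{G}$-measurable, $\cnde{(P_{\mathcal{G}} f)\,h}{\mathcal{G}} = (P_{\mathcal{G}} f)(P_{\mathcal{G}} h)$, and integrating over $I$ gives $\int_I (P_{\mathcal{G}}f)\,h = \int_I (P_{\mathcal{G}} f)(P_{\mathcal{G}} h)$. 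The right-hand side is symmetric in $f$ and $h$, whence $\langle P_{\mathcal{G}} f, h\rangle = \langle f, P_{\mathcal{G}} h\rangle$. This is the only place where the vector-valued nature of the functions needs a word, and the separability of $\mathcal{H}$ guarantees that $\cnde{\cdot}{\mathcal{G}}$ is well defined on $L^2(I,\mathcal{H})$.

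Second, with $P \df \cnde{\cdot}{\mathcal{F}_J}$ and $Q \df \cnde{\cdot}{\mathcal{F}_J^{\,\mathrm{prev}}}$, the inclusion $\mathcal{F}_J^{\,\mathrm{prev}} \subseteq \mathcal{F}_J$ yields the two tower identities $QP = Q$ and $PQ = Q$: the first because conditioning a finer average on the coarser algebra returns the coarser one, the second because $Qf$ is already $\mathcal{F}_J^{\,\mathrm{prev}}$-measurable and hence fixed by $P$. Consequently $(P-Q)^2 = P - PQ - QP + Q = P - Q$, while $(P-Q)^{\ast} = P - Q$ by the self-adjointness of $P$ and $Q$. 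Thus $\Delta_J = P - Q$ is a self-adjoint idempotent, i.e. an orthogonal projection, its range being $L^2(I,\mathcal{F}_J,\mathcal{H}) \ominus L^2(I,\mathcal{F}_J^{\,\mathrm{prev}},\mathcal{H})$.

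I do not expect a genuine obstacle here: once the two facts above are in hand the verification is purely algebraic. The only point deserving care is the self-adjointness of conditional expectation in the $\mathcal{H}$-valued setting, which the pull-out argument settles; everything else is the standard calculus of the commuting projections associated with a nested pair of finite algebras.
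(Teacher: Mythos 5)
Your proof is correct, and in substance it rests on the same two ingredients as the paper's: the pull-out property of conditional expectation and the tower property for the nested pair $\mathcal{F}_J^{\,\mathrm{prev}} \subseteq \mathcal{F}_J$. The difference is organizational. The paper takes idempotency for granted (its opening sentence reduces the lemma to checking self-adjointness) and verifies self-adjointness of $\Delta_J$ in a single direct computation: by pull-out,
$\E\,\bigl[f\, \Delta_J g\bigr] = \E\,\bigl[\Delta_J g\,\cnde{f}{\mathcal{F}_J}\bigr]$,
while
$\E\,\bigl[\Delta_J g\,\cnde{f}{\mathcal{F}_J^{\,\mathrm{prev}}}\bigr] = 0$;
subtracting gives $\E\,\bigl[f\, \Delta_J g\bigr] = \E\,\bigl[\Delta_J f\, \Delta_J g\bigr]$, which is symmetric in $f$ and $g$. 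You instead factor the statement through two standard lemmas --- each conditional expectation $P_{\mathcal{G}}$ is an orthogonal projection onto $L^2(I,\mathcal{G},\mathcal{H})$, and a difference $P-Q$ of orthogonal projections satisfying $PQ = QP = Q$ is again an orthogonal projection --- so you verify idempotency explicitly via the tower identities rather than assuming it. Your route is slightly longer but more modular and self-contained: it makes explicit the algebraic fact the paper leaves implicit, identifies the range of $\Delta_J$ as $L^2(I,\mathcal{F}_J,\mathcal{H}) \ominus L^2(I,\mathcal{F}_J^{\,\mathrm{prev}},\mathcal{H})$, and covers $\Delta_0$ as well. Both arguments treat the $\mathcal{H}$-valued pull-out the same way (justified by separability of $\mathcal{H}$), so neither gains real generality over the other; the paper's version simply compresses your two lemmas into one expectation identity.
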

\begin{proof}
In order to prove that the projections~$\Delta_J$ are orthogonal projections, it is necessary and sufficient to check that they are self-adjoint. Let $f,g\in L^2(I,\mathcal{H})$. We have
\begin{equation}\label{eq:EE}
     \E\,\bigl[f\, \Delta_J g\bigr] = \E\,\bigl[\cnde{f\, \Delta_J g}{\mathcal{F}_J}\bigr]
     = \E\,\bigl[\Delta_J g\,\cnde{f}{\mathcal{F}_J}\bigr]
\end{equation}
and
\begin{equation}\label{eq:zero}
0 = \E\,\bigl[\Delta_J g\,\cnde{f}{\mathcal{F}_J^{\,\mathrm{prev}}}\bigr].
\end{equation}
By subtracting~\eqref{eq:zero} from \eqref{eq:EE} and using symmetry, we have
$$
    \E\,\bigl[f\, \Delta_J g\bigr] = \E\,\bigl[\Delta_J f\, \Delta_J g\bigr] = \E\,\bigl[g\,\Delta_J f\bigr].
$$
\end{proof}
\begin{Le}\label{le:orth2}
  For $J \in \mathcal{D}$, the subspaces $\Delta_J L^2(I, \mathcal{H})$ are mutually orthogonal.
\end{Le}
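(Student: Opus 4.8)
The plan is to exploit the fact that the newly constructed filtration $\{\mathcal{F}_0,\mathcal{F}_J\}\cii{J\in\mathcal{D}}$ is a \emph{totally ordered} chain of finite algebras: by construction, at every step exactly one atom is split, so the algebras are linearly ordered by inclusion, and each $\Delta_J$ is precisely the martingale difference attached to one step of this chain. Thus the statement reduces to the classical orthogonality of martingale differences, and the argument will closely parallel that of Lemma~\ref{le:orth1}.

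First I would fix two distinct atoms $J,K\in\mathcal{D}$ and, without loss of generality, assume that $J$ is split before $K$ in the construction. Reading this off the construction, it means $\mathcal{F}_J\subseteq\mathcal{F}_K^{\,\mathrm{prev}}\subseteq\mathcal{F}_K$: once $J$ has been cut, its splitting is recorded in every subsequent algebra, in particular in the algebra $\mathcal{F}_K^{\,\mathrm{prev}}$ present just before $K$ is cut. Establishing this nesting carefully from the (arbitrary but fixed) order of splitting is the only place where one must be attentive; everything after it is routine.

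Next I would record two consequences. Since $\Delta_J f=\cnde{f}{\mathcal{F}_J}-\cnde{f}{\mathcal{F}_J^{\,\mathrm{prev}}}$ is $\mathcal{F}_J$-measurable, it is also $\mathcal{F}_K^{\,\mathrm{prev}}$-measurable. On the other hand, the tower property applied to $\mathcal{F}_K^{\,\mathrm{prev}}\subseteq\mathcal{F}_K$ yields
$$
\cnde{\Delta_K g}{\mathcal{F}_K^{\,\mathrm{prev}}}=\cnde{g}{\mathcal{F}_K^{\,\mathrm{prev}}}-\cnde{g}{\mathcal{F}_K^{\,\mathrm{prev}}}=\bm 0.
$$

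Finally, for arbitrary $f,g\in L^2(I,\mathcal{H})$ I would condition on $\mathcal{F}_K^{\,\mathrm{prev}}$ and pull out the measurable factor $\Delta_J f$:
$$
\E\,\bigl[(\Delta_J f)(\Delta_K g)\bigr]
=\E\,\bigl[(\Delta_J f)\,\cnde{\Delta_K g}{\mathcal{F}_K^{\,\mathrm{prev}}}\bigr]=0,
$$
which is exactly the mutual orthogonality of the ranges $\Delta_J L^2(I,\mathcal{H})$ and $\Delta_K L^2(I,\mathcal{H})$. The same computation, with $\mathcal{F}_0$ in place of $\mathcal{F}_J$, also shows that $\Delta_0 L^2(I,\mathcal{H})$ is orthogonal to every $\Delta_K L^2(I,\mathcal{H})$, so the full system is orthogonal. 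I expect the main (indeed the only) obstacle to be the bookkeeping in the second step—verifying from the construction that splitting $J$ before $K$ really forces $\mathcal{F}_J\subseteq\mathcal{F}_K^{\,\mathrm{prev}}$—since the orthogonality itself is the standard conditioning argument already used in Lemma~\ref{le:orth1}.
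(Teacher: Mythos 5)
Your proof is correct, but it is organized differently from the paper's. The paper argues by cases on the geometric relation between two distinct atoms $Q,J\in\mathcal{D}$: either $Q\cap J=\emptyset$, in which case $\E\,\bigl[\Delta_Q f\,\Delta_J g\bigr]=0$ simply because $\supp\Delta_Q f\subseteq Q$ and $\supp\Delta_J g\subseteq J$ are disjoint, or $Q\subsetneq J$ (resp.\ $J\subsetneq Q$), where the vanishing follows from the mean-zero property of the inner difference against the outer one, i.e.\ essentially the conditioning computation you wrote. You dispense with the case analysis altogether: you use only the fact that the constructed filtration is a single increasing chain in which each $\Delta_K$ is the martingale difference of one step, so that for the later of the two atoms $\cnde{\Delta_K g}{\mathcal{F}_K^{\,\mathrm{prev}}}=\bm 0$ by the tower property, while the earlier difference $\Delta_J f$ is $\mathcal{F}_J$-measurable and hence $\mathcal{F}_K^{\,\mathrm{prev}}$-measurable; one conditioning then settles all pairs uniformly. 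Your key nesting claim $\mathcal{F}_J\subseteq\mathcal{F}_K^{\,\mathrm{prev}}$ is indeed immediate from the construction, since the algebras are built one at a time and $\mathcal{F}_K^{\,\mathrm{prev}}$ is the algebra immediately preceding $\mathcal{F}_K$ in the chain; also, pulling the factor $\Delta_J f$ out of the conditional expectation is unproblematic here because the algebras are finite, so the differences are simple functions. What each route buys: yours is uniform, never invokes the localization $\supp\Delta_J f\subseteq J$, and shows in passing that the $\Delta_J$'s form a genuine martingale difference sequence (hence your bonus observation about orthogonality to $\Delta_0$, which the lemma as stated does not require); the paper's version is shorter because in the disjoint case the integrand vanishes pointwise, and it keeps the support picture in the foreground, which is what gets reused in Lemma~\ref{le:loc} and in the proof of Theorem~\ref{th:BleB}.
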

\begin{proof}
It is obvious that
$$
\E\,\bigl[\Delta_Q f\, \Delta_J g\bigr] = 0
$$
whenever $Q\cap J = \emptyset$ or $Q \subsetneq J$. 
\end{proof}
We also need the following localization property.
\begin{Le}\label{le:loc}
    Suppose $T \in \mathcal{G}(I,\mathcal{H})$ and $f \in \Delta_J L^2(I, \mathcal{H})$ for $J \in \mathcal{D}$. 
    Then we have $\supp Tf \subseteq J$.
\end{Le}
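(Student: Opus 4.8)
The plan is to reduce the assertion to property~\ref{it:G2}, which is phrased for the \emph{original} filtration $\{\mathcal{F}_n\}$, whereas $\Delta_J$ is built from the \emph{reordered} filtration $\{\mathcal{F}_J\}$. The bridge between the two is the observation that a single range element of $\Delta_J$ produces exactly one nonzero martingale increment of the original filtration. Fix $J\in\mathcal{D}$ and let $n$ be the time at which $J$ is split, i.e. $J\in\atoms_n\setminus\atoms_{n+1}$. First I would record the elementary structure of $f\in\Delta_J L^2(I,\mathcal{H})$. Since $\mathcal{F}_J$ and $\mathcal{F}_J^{\,\mathrm{prev}}$ differ only in that $J$ is subdivided into its $\mathcal{F}_{n+1}$-children $Q$, the function $f$ is supported on $J$, is constant on each child $Q\in\atoms_{n+1}$ with $Q\subseteq J$, and satisfies $\int_J f = \bm 0$ (conditioning the $\mathcal{F}_J$-part back onto $\mathcal{F}_J^{\,\mathrm{prev}}$ reproduces the average over $J$). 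In particular $f$ is $\mathcal{F}_{n+1}$-measurable and $\Av{f}{I}=\bm 0$.

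Next I would pass to the martingale $\{\E_m f\}$ associated with the \emph{original} filtration. From the two facts just recorded — $f$ is $\mathcal{F}_{n+1}$-measurable, and it has mean zero on the $\mathcal{F}_n$-atom $J$ while vanishing off $J$ — one computes $\E_m f = \bm 0$ for $m\le n$ and $\E_m f = f$ for $m\ge n+1$. Hence $\{\E_m f\}$ stabilizes (so it is a simple martingale and $Tf$ is well defined), and its only nonzero difference is $\E_{n+1}f-\E_n f = f$. Crucially, $f = \chr_J f$ with $J\in\mathcal{F}_n=\mathcal{F}_{(n+1)-1}$, so, taking $e_{n+1}=J$ and $e_m=\emptyset$ for the vanishing differences at times $m\ne n+1$, together with $\Delta_0 f = \Av{f}{I}=\bm 0$, the function $f$ meets the hypotheses of~\ref{it:G2}.

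Applying~\ref{it:G2} then yields $\{|Tf|>0\}\subseteq\bigcup_{m>0}e_m = J$, which is exactly the desired inclusion $\supp Tf\subseteq J$. The only genuine subtlety — and the step I would be most careful about — is the reconciliation of the two filtrations in the middle paragraph: one must verify that a range element of $\Delta_J$, although defined through the one-atom-at-a-time filtration $\{\mathcal{F}_J\}$, is recovered as the single original-filtration increment $\E_{n+1}f-\E_n f$ supported on $J$, so that the hypothesis $\Delta_m f = \chr_{e_m}\Delta_m f$ of~\ref{it:G2} holds verbatim with $e_m\in\mathcal{F}_{m-1}$. Everything else is a routine computation of conditional expectations.
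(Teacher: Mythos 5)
Your proof is correct and takes essentially the same route as the paper's: the paper's two-line argument asserts precisely that, by the construction of $\mathcal{F}_J$, there exists $n>0$ with $f = \Delta_n f = \chr_{J}\Delta_n f$ and $J\in\mathcal{F}_{n-1}$, and then invokes~\ref{it:G2}. Your middle paragraph merely fills in the conditional-expectation computation (mean zero on $J$, measurability with respect to the finer original algebra, hence a single nonzero increment) that the paper leaves implicit.
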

\begin{proof}
By the construction of $\mathcal{F}_J$, there exists $n>0$ such that 
$$f = \Delta_n f = \chr_{J} \Delta_n f \quad\mbox{and}\quad J \in \mathcal{F}_{n-1}.$$ 
Thus, by~\ref{it:G2} we have $\supp Tf \subseteq J$.
\end{proof}

Now we are ready to prove the theorem.
Fix $x\in\Omega_{\bell}$ and consider $f\in S(I,\mathcal{H})$, $g\in L^2(I)$, and $T\in\mathcal{G}(I,\mathcal{H})$ such that $x=x^I$, where
$$
x^J = \bigl(x_1^J,x_2^J,x_3^J,x_4^J\bigr) \df \big(\Av{f}{J},\,\Av{g^2}{J} - \osc_J^2(T^*g),\,\Av{|f|^p}{J},\,\Av{|g|^q}{J}\big),\quad J \in \mathcal{D}. 
$$
Representing the restriction $(T^*g)\big|_{J} \in L^2(I,\mathcal{F}_\infty,\mathcal{H})$ as the sum of the corresponding martingale differences with respect to $\{\mathcal{F}_J\}$ and using their pairwise orthogonality (see Lemma~\ref{le:orth2}), 
we obtain 
\begin{equation}\label{eq:osc_ser}
    \osc_J^2(T^*g) = \frac{1}{|J|}\sum_{Q\subseteq J, Q \in \mathcal{D}} \bigl\|\Delta_Q T^* g\bigr\|_{L^2(\mathcal{H})}^2.
\end{equation}
Now we prove that the property $x_2^I \ge 0$ is inherited by $x_2^J$ for all $J \in \mathcal{D}$.
This is the very place where we need~\ref{it:G2}.
Starting with~\eqref{eq:osc_ser} and applying the fact that the operators~$\Delta_J$ are self-adjoint (see Lemma~\ref{le:orth1}) together with the localization property of~$T$ (see Lemma~\ref{le:loc}), we obtain 
\begin{align*}
\osc_J^2(T^*g) &= \frac{1}{|J|}\sum_{Q\subseteq J, Q \in \mathcal{D}} 
\bigl(g,\,T \Delta_Q T^* g\bigr)_{L^2(\mathcal{H})}\\
&=\frac{1}{|J|}\sum_{Q\subseteq J, Q \in \mathcal{D}} 
\bigl(g|\Cii{J},\,T \Delta_Q T^* g\bigr)_{L^2(\mathcal{H})}\\
&=\frac{1}{|J|}\sum_{Q\subseteq J, Q \in \mathcal{D}} 
\bigl(T \Delta_Q T^*\bigl[g|\Cii{J}\bigr],\, g|\Cii{J}\bigr)_{L^2(\mathcal{H})}\\
&=\frac{1}{|J|}\sum_{Q \in \mathcal{D}} \bigl\|\Delta_Q T^* \bigl[g|\Cii{J}\bigr]\bigr\|_{L^2(\mathcal{H})}^2
=\frac{|I|}{|J|}\osc_I^2\bigl(T^*\bigl[g|\Cii{J}\bigr]\bigr).
\end{align*}
Thus, applying~\eqref{eq:x2_ineq} to $g|\Cii{J}$, we have $x_2^J \ge 0$.

Next, we set
$$
d_{J} \df {|J|^{-1/2}}\bigl\|\Delta_J T^* g\bigr\|_{L^2(\mathcal{H})}.
$$
Due to~\eqref{eq:osc_ser}, we have
\begin{equation}\label{eq:d2_x2}
    d_{J}^2= \sum_{Q\subseteq J, Q\in\atoms_J}\frac{|Q|}{|J|}x_2^{Q} - x_2^J.
\end{equation}
We also obtain
\begin{multline}\label{eq:diam_gTf}
|d_J|\,\diam\bigl\{x_1^{Q}\bigr\}_{Q\subseteq J, Q\in\atoms_J}\ge 
|d_J|\,\max\bigl\{\bigl|x_1^{Q} - x_1^J\bigr|\bigr\}_{Q\subseteq J, Q\in\atoms_J}=
|d_J|\,\max|\Delta_J f| \\[5pt]
\ge \frac{1}{|J|}\bigl\|\Delta_J T^* g\bigr\|_{L^2(\mathcal{H})}
\bigl\|\Delta_J f\bigr\|_{L^2(\mathcal{H})} 
\ge \frac{1}{|J|} \bigl(\Delta_J T^* g, \Delta_J f\bigr)_{L^2(\mathcal{H})}.
\end{multline}
Relying on~\eqref{eq:d2_x2} and~\eqref{eq:diam_gTf} and recursively applying inequality~\eqref{eq:main_ineq} up to a level~$n$, we obtain
\begin{equation}\label{eq:bell_ind}
    B(x) \ge \frac{1}{|I|}\sum_{J\in \!\!\bigcup\limits_{j<n}\!\!\!\atoms_{j}\setminus\atoms_n} \bigl(\Delta_J T^* g, \Delta_J f\bigr)_{L^2(\mathcal{H})}
    +\frac{1}{|I|}\sum_{J \in \atoms_n}|J| B(x^J).
\end{equation}
We introduce the step function
\[
x^n(t) = (x_1^n(t),x_2^n(t),x_3^n(t),x_4^n(t))
\]
that takes values $x^J$ on the intervals $J \in \atoms_n$.
Since $f$ is a simple martingale, there exists $n$ such that inequality~\eqref{eq:bell_ind} can be written as
$$
    B(x) \ge \frac{1}{|I|}\sum_{J\in\mathcal{D}} \bigl(\Delta_J T^* g, \Delta_J f\bigr)_{L^2(\mathcal{H})}
    +\int\limits_I B\bigl(f(t), x_2^n(t), |f(t)|^p, x_4^n(t)\bigr)\,dt.
$$
Using Lemma~\ref{le:orth2} and the boundary condition~\ref{it:bnd_cnd}, we arrive at the desired estimate
$$
    B(x) \ge \bigl\langle g\,T\bigl[f-\Av{f}{I}\bigr]\bigr\rangle\Cii{I}.\eqno\qed
$$


\section{Proof of Corollary~\ref{cor}}
Due to~\cite{BoOsTs2022}, we have a function $\mathcal{B} \in \mathcal{K}^p_{{1}/{2}}(\mathcal{H})$ that has a form
\begin{equation}\label{eq:B}
\mathcal{B}(x) = C_p (x_3 + x_4) - h(x_1, x_2),
\end{equation}
where $h$ is a non-negative function.
By Lemma~\ref{le:any_dlt}, we have a constant~$C_\delta > 0$ such that
\begin{equation}\label{eq:B_dlt}
\mathcal{B}_\delta \df C_\delta \mathcal{B} \in \mathcal{K}^p_{\delta}(\mathcal{H}).
\end{equation}

Now we proceed in much the same way as in~\cite{BoOsTs2022}. Suppose $f \in S(I,\mathcal{H})$, ${g\in L^q(I)}$, and
$$
    x\df\Big(\Av{f}{I},\,\tfrac{1}{|I|}\|g\|\Cii{L^2}^2 - \osc_{I}^2(T^*g),\, 
\tfrac{1}{|I|}\|f\|\Cii{L^p}^p,\,\tfrac{1}{|I|}\|g\|\Cii{L^q}^q\Big).
$$
Let $\lambda > 0$. By the homogeneity of~$\bell$ and by Theorem~\ref{th:BleB} combined 
with~\eqref{eq:B} and~\eqref{eq:B_dlt}, we obtain
\begin{equation}\label{eq:cor_hom}
\begin{aligned}
    \Av{g\,Tf}{I} - \Av{f}{I}\,\Av{T^*g}{I} &\le \bell(x_1,x_2,x_3,x_4)\\
    &= \bell\bigl(\lambda x_1,\,\lambda^{-2}x_2,\,\lambda^p x_3,\,\lambda^{-q}x_4\bigr)\\
    &\le \mathcal{B}_\delta\bigl(\lambda x_1,\,\lambda^{-2}x_2,\,\lambda^p x_3,\,\lambda^{-q}x_4\bigr)\\
    &\le C_\delta\, C_p\,(\lambda^p x_3+\lambda^{-q}x_4).
\end{aligned}
\end{equation}

In order to guess optimal~$\lambda$, we need to solve the equation 
$$\partial_{\lambda}\big[\lambda^p x_3 + \lambda^{-q}x_4\big] = 0.$$
We obtain 
\begin{equation}\label{eq:cor_lmd}
\lambda = \biggl(\frac{q\,x_4}{p\,x_3}\biggr)^{\tfrac{1}{p+q}}.
\end{equation}

By H\"older's inequality, we get
$$
\int_I |T^*g| \le |I|^{1/2} \|T^*g\|\Cii{L^2} \le |I|^{1/2} \|g\|\Cii{L^2} \le |I|^{1-1/q} \|g\|\Cii{L^q}
$$
and
$$
\int_I |f| \le |I|^{1/q} \|f\|\Cii{L^p}.
$$
Thus, we have
\begin{equation}\label{eq:cor_Hld}
    \bigl|\Av{f}{I}\Av{T^*g}{I}\bigr| 
    \le \frac{1}{|I|}\|f\|\Cii{L^p}\|g\|\Cii{L^q}.
\end{equation}
Combining~\eqref{eq:cor_hom}, \eqref{eq:cor_lmd}, and~\eqref{eq:cor_Hld} for $g$ and $-g$, we 
obtain
$$
    \biggl|\int_I g\,Tf\biggr| \le C_{p,\delta}\,\|f\|\Cii{L^p}\|g\|\Cii{L^q}.
$$
Thus, we have the desired inequality for $f \in S(I,\mathcal{H})$. 
Finally, any function $f$ in $L^p(I,\mathcal{F}_\infty,\mathcal{H})$ can be approximated by its expectations $\E_n f$, and we are done.\qed

\printbibliography
\end{document}